\documentclass[
final,
a4paper
]{amsart}
\usepackage{amssymb,latexsym}

\usepackage[english]{babel}
\usepackage[utf8]{inputenc}

\usepackage[T1]{fontenc}
\usepackage{ae,aecompl}

\usepackage{enumitem}
\usepackage[notref,notcite,color]{showkeys}
\usepackage{verbatim}
\usepackage{url}

\usepackage{xspace,ifthen,dsfont}

\DeclareMathOperator{\lth}{lth}
\newcommand*{\lp}[2][]{\ensuremath{
    \mathrm{
      \ifthenelse{\equal{#1}{}}{#2}{#1 \text{-} #2}}}\xspace}
\newcommand*{\lpp}[3][]{\ensuremath{
    \mathrm{
      \ifthenelse{\equal{#1}{}}{#2}{#1 \text{-} #2}}(#3)}\xspace}
\newcommand*{\ls}[2][]{\ensuremath{
  \mathrm{
      \ifthenelse{\equal{#1}{}}{#2}{#1 \text{-} #2}}}\xspace}

\newcommand{\IMPL}[1][]{
  \ifthenelse{\equal{#1}{}}{\mathop{\rightarrow}}{\mathop{\stackrel{#1}{\longrightarrow}}}}

\newcommand*{\AND}{\mathrel{\land}}
\newcommand*{\OR}{\mathrel{\lor}}

\newcommand*{\IFF}[1][]{
  \ifthenelse{\equal{#1}{}}{\mathrel{\leftrightarrow}}{\mathrel{\stackrel{#1}{\longleftrightarrow}}}}

\newcommand*{\Quantor}[2]{{#1 #2}\,}
\newcommand*{\Forall}[1]{\Quantor{\forall}{#1}}
\newcommand*{\Exists}[1]{\Quantor{\exists}{#1}}

\newcommand*{\Nat}{\ensuremath{\mathds{N}}}
\newcommand*{\Real}{\ensuremath{\mathds{R}}}
\newcommand*{\Rat}{\ensuremath{\mathds{Q}}}

\newcommand*{\Theorem}{Theorem}
\newcommand*{\Proposition}{Proposition}
\newcommand*{\Lemma}{Lemma}
\newcommand*{\Corollary}{Corollary}
\newcommand*{\Definition}{Definition}
\newcommand*{\Remark}{Remark}
\newcommand*{\Notation}{Notation}

\theoremstyle{plain}
\newtheorem{theorem}{\Theorem}

\newtheorem{corollary}[theorem]{\Corollary}
\newtheorem{lemma}[theorem]{\Lemma}
\theoremstyle{definition}
\newtheorem{definition}[theorem]{\Definition}
\theoremstyle{remark}

\title{The cohesive principle and the Bolzano-Weierstra{\ss} principle}
\address{Fachbereich Mathematik, Technische Universit\"{a}t Darmstadt\\
Schlossgartenstra\ss e 7, 64289 Darmstadt, Germany}
\thanks{The author gratefully acknowledges the support by the German Science Foundation (DFG Project KO 1737/5-1).}
\thanks{I am grateful to Ulrich Kohlenbach for useful
discussions and suggestions for improving the presentation
of the material in this article.}
\author{Alexander P. Kreuzer}
\email{akreuzer@mathematik.tu-darmstadt.de}
\urladdr{http://www.mathematik.tu-darmstadt.de/~akreuzer}
\subjclass[2010]{03F60, 03D80, 03B30}
\keywords{Bolzano-Weierstra{\ss} principle, cohesive principle, sequential compactness}


\begin{document}
\begin{abstract}
  The aim of this paper is to determine the logical and computational strength of instances of the Bolzano-Weierstra\ss\ principle (\lp{BW}) and a weak variant of it.

  We show that \lp{BW} is instance-wise equivalent to the weak K\"onig's lemma for $\Sigma^0_1$\nobreakdash-trees (\lp[\Sigma^0_1]{WKL}). This means that from every bounded sequence of reals one can compute an infinite $\Sigma^0_1$\nobreakdash-0/1-tree, such that each infinite branch of it yields an accumulation point and vice versa. Especially, this shows that the degrees $d\gg 0'$ are exactly those containing an accumulation point for all bounded computable sequences.

   Let \lp{BW_{weak}} be the principle stating that every bounded sequence of real numbers contains a Cauchy subsequence (a sequence converging but not necessarily fast).
  We show that \lp{BW_{weak}} is instance-wise equivalent to the (strong) cohesive principle (\lp{StCOH}) and --- using this --- obtain a classification of the computational and logical strength of \lp{BW_{weak}}.
  Especially we show that \lp{BW_{weak}} does not solve the halting problem and does not lead to more than primitive recursive growth. Therefore it is strictly weaker than \lp{BW}.
 We also discuss possible uses of \lp{BW_{weak}}.
\end{abstract}
\maketitle

In this paper we investigate the logical and recursion theoretic strength of instances of the Bolzano-Weierstra\ss\ principle (\lp{BW}) and the weak variant of it stating only the existence of a slow converging Cauchy subsequence (\lp{BW_{weak}}). Slow converging means here that the rate of convergence does not need to be computable.

Let weak K\"onig's lemma (\lp{WKL}) be the principle stating that an infinite $0/1$-tree has an infinite branch and let \lp[\Sigma^0_1]{WKL} be the statement that an infinite $0/1$-tree given by a $\Sigma^0_1$-predicate has an infinite branch.

We show that \lp{BW} and \lp[\Sigma^0_1]{WKL} are \emph{instance-wise} equivalent. 
Instance-wise means here that for every instance of \lp{BW},
i.e.\ every bounded sequence, one can compute, uniformly, an instance of \lp[\Sigma^0_1]{WKL},
i.e.\ a code for an infinite $\Sigma^0_1$-0/1-tree,
such that from a solution of this instance of \lp[\Sigma^0_1]{WKL} one can compute, uniformly, an accumulation point and vice versa.
\emph{Instance-wise equivalence}  refines the usual logical equivalence where the full second order closure of the principles may be used --- e.g.\ arithmetical comprehension (\ls{ACA_0}, i.e.\ the schema $\Exists{X}\Forall{n} \left(n\in X \IFF \phi(n)\right)$ for any arithmetical formula $\phi$) and \lp[\Pi^0_1]{CA} (comprehension where $\phi$ is restricted to $\Pi^0_1$-formulas) are equivalent but they are not instance-wise equivalent. 
As consequence we obtain that the Turing degrees containing solutions to all instances of \lp[\Sigma^0_1]{WKL} (i.e.\ the degrees $d$ with $d\gg 0'$, see below) are exactly those containing an accumulation point for each computable bounded sequence. 

Furthermore, we show that \lp{BW_{weak}} is instance-wise equivalent to the 
strong cohesive principle, see Definition \ref{def:coh} below. Using this one can apply classification results obtained for the (strong) cohesive principle, see \cite{HS07,JS93,CJS01,CSYta}. Especially this shows that the $low_2$ degrees, i.e.\ degrees $d$ with $d'' \equiv 0''$,  are exactly those containing a slowly converging subsequence for every computable bounded sequence.
This shows also that \lp{BW_{weak}} does not lead to more than primitive recursive growth when added to \ls{RCA_0}.

\section{Cohesive Principle}

\begin{definition}\label{def:coh}
  Let $(R_n)_{n\in \Nat}$ be a sequence of subsets of $\Nat$.
  \begin{itemize}
  \item  A set $S$ is \emph{cohesive} for $(R_n)_{n\in \Nat}$ if
    $\Forall{n} \left(S\subseteq^* R_n \OR S \subseteq^* \overline{R_n}\right)$,\footnote{$A\subseteq^* B$ stands for $A \setminus B$ is finite.}
    i.e.
    \[
    \Forall{n} \Exists{s} \left(\Forall{j \ge s} \left(j\in S \IMPL j\in R_n\right) \OR \Forall{j\ge s} \left(j\in S \IMPL j\notin R_n\right) \right)
    .\]
  \item A set $S$ is \emph{strongly cohesive} for $(R_n)_{n\in \Nat}$ if 
    \[
    \Forall{n} \Exists{s} \Forall{i<n} \left(\Forall{j \ge s} \left(j\in S \IMPL j\in R_i\right) \OR \Forall{j\ge s} \left(j\in S \IMPL j\notin R_i\right) \right)
    .\]
  \item A set is called (\emph{p-cohesive}) \emph{r-cohesive} if it is cohesive for all (primitive) recursive sets.
  \end{itemize}
\end{definition}

\begin{definition}
  The \emph{cohesive principle} (\lp{COH}) is the statement that for every sequence of sets an infinite cohesive set exists.
  Similarly, the \emph{strong cohesive principle} (\lp{StCOH}) is the statement that for every sequence of sets an infinite strongly cohesive set exists.

  We will denote by \lpp{(St)COH}{X} the statement that for the sequence of sets $(R_n)_n$ coded by $X$  an infinite (strongly) cohesive set exists.
\end{definition}
Hirschfeldt and Shore showed in \cite[4.4]{HS07} that \lp{StCOH} is equivalent to $\lp{COH} \AND \lp[\Pi^0_1]{CP}$, where \lp[\Pi^0_1]{CP} is the $\Pi^0_1$-bounded collection princple 
\[
\Forall{n}\left(\Forall{x<n}\Exists{y} \phi(x,y) \IMPL \Exists{z}\Forall{x<n}\Exists{y<z}\, \phi(x,y)\right) \qquad\text{for any $\Pi^0_1$-formula $\phi$.}
\]
\lp[\Pi^0_1]{CP} follows from $\Sigma^0_2$-induction. Therefore there is no recursion theoretic difference between \lp{StCOH} and \lp{COH}.

The recursion theoretic strength of the cohesive principle is well understood, its reverse mathematical strength is a topic of active research mainly in the context of the classification of Ramsey's theorem for pairs, see \cite{HS07} for a survey.

To state the recursion theoretic strength of \lp{COH} we will need following notation. Denote by $a \gg b$ that the Turing degree $a$ contains an infinite computable branch for every $b$\nobreakdash-\hspace{0pt}computable 0/1-tree, see \cite{sS77}. In particular, the degrees $d\gg 0'$ are exactly those which contain an infinite path for every $\Sigma^0_1$-0/1-tree. By the low basis theorem for every $b$ there exists a degree $a \gg b$ which is $low$ over $b$, i.e.\ $a'\equiv b'$, see \cite{JS72}.

\begin{theorem}[\cite{JS93,JS93cor}, see also {\cite[theorem 12.4]{CJS01}}]\label{thm:js}
  For any degree $d$ the following are equivalent:
  \begin{itemize}
  \item There is an r-cohesive (p-cohesive) set with jump of degree $d$,
  \item $d \gg 0'$.
  \end{itemize}
  In particular, there exists a $low_2$ r-cohesive set.
\end{theorem}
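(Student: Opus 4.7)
The plan is to prove the equivalence in both directions, exploiting the following basic observation: for any recursive (or primitive recursive) set $R$ and any set $C$, the statement $C \subseteq^* R$ is $\Sigma^0_2$ in $C$, and hence decidable from $C'$. Cohesiveness guarantees that exactly one of $C \subseteq^* R$ and $C \subseteq^* \overline{R}$ holds, so for any uniformly recursive sequence $(R_n)$ the function $n \mapsto \chi(C \subseteq^* R_n)$ is $C'$-computable. This is what links cohesive sets to PA degrees over $0'$.

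\medskip

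\textbf{Forward direction.} Let $C$ be r-cohesive; I want to show $C' \gg 0'$. Fix an arbitrary $\Pi^0_1$ (equivalently, $0'$-computable) $0/1$-tree $T$, and build an infinite branch of $T$ computably in $C'$. The idea is to design a uniformly recursive sequence of sets $(R_\sigma)_{\sigma \in 2^{<\omega}}$ so that for the (unique) choice made by $C$ at index $\sigma$, the bit ``$C \subseteq^* R_\sigma$'' tells us whether to follow the leftmost-extendible child of $\sigma$ in $T$ or the right one. Since ``$\sigma \in T$ has infinitely many extensions in $T$'' is $\Pi^0_2$, hence decidable from $0''$, one can arrange the $R_\sigma$ so that cohesiveness is consistent with following an extendible path. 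Then reading the bits $\chi(C \subseteq^* R_\sigma)$ level by level yields an infinite branch, computed from $C'$.

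\medskip

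\textbf{Backward direction.} Suppose $d \gg 0'$. I construct an r-cohesive $C = \{c_0 < c_1 < \dots\}$ with $C' \equiv_T d$. Enumerate the recursive sets as $R_0, R_1, \dots$. At stage $n$ one has $c_0, \dots, c_{n-1}$ and a current ``itinerary'' $\tau \in 2^n$ stating, for each $i < n$, whether $C$ is being forced into $R_i$ or $\overline{R_i}$; one must pick $c_n$ and possibly extend $\tau$ so that the construction stays alive. The set of live finite states forms a finitely branching tree which is $\Pi^0_1$ in $0'$ (``the construction can be continued forever'' is $\Pi^0_2$), so by the assumption $d \gg 0'$ the degree $d$ chooses an infinite branch, producing $C$ with $C \leq_T d$ and $C' \leq_T d$. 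To obtain $d \leq_T C'$ one codes bits of $d$ into the construction — e.g.\ at reserved stages, deciding whether $c_n$ is even or odd according to a bit of $d$ — which is recoverable from $C$, hence from $C'$. Combining the two reductions gives $C' \equiv_T d$.

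\medskip

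\textbf{The ``in particular'' clause.} By the low basis theorem relativized to $0'$ (see \cite{JS72}), there exists a degree $d \gg 0'$ with $d' \equiv 0''$. Applying the backward construction to such a $d$ yields an r-cohesive set $C$ with $C'' \equiv d' \equiv 0''$, that is, $C$ is $low_2$.

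\medskip

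The main obstacle is the coding in the backward direction: producing some r-cohesive set computed by $d$ is easy once $d \gg 0'$ is used to pick a branch through the tree of live construction states, but arranging the jump of the resulting set to be \emph{exactly} of degree $d$ (and not merely $\leq_T d$) requires a careful coding that survives the cohesiveness requirements without destroying them.
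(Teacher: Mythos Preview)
The paper does not prove this theorem: it is quoted from Jockusch--Stephan with references and no argument. The only thing the paper itself supplies is an alternative route to the ``in particular'' clause, and that route differs from yours. After Theorems~\ref{thm:bw} and~\ref{thm:stcohbwweak} the paper takes a degree $d\gg 0'$ that is low over $0'$ (low basis theorem), notes that any such $d$ solves every computable instance of $\lp{BW}$, and then uses the fact that extracting a fast-converging subsequence from a slow one costs exactly one jump to obtain a degree $e$ with $e'\equiv d$ solving every computable instance of $\lp{BW_{weak}}$; via the instance-wise equivalence with $\lp{StCOH}$ this $e$ contains an r-cohesive set, and $e''\equiv d'\equiv 0''$.

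Your sketch instead follows the original Jockusch--Stephan strategy. It is on the right track but two points deserve flagging. In the forward direction you assert that ``one can arrange the $R_\sigma$ so that cohesiveness is consistent with following an extendible path,'' but the mechanism is absent: the point is the limit lemma. Extendibility of $\sigma\!\frown\!0$ in the $0'$-computable tree $T$ is $\Pi^0_2$, hence has a recursive stage-by-stage approximation $g(\sigma,s)$, and one sets $R_\sigma=\{s:g(\sigma,s)=0\}$; only with this device is $R_\sigma$ visibly recursive and the cohesive choice forced to agree with the true limit. In the backward direction you claim that the branch through the $\Pi^0_1(0')$ tree already yields $C'\le_T d$, the remaining obstacle being to code $d$ into $C$ for $d\le_T C'$. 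This is inverted. The tree of ``live states'' you describe records only the cohesiveness itinerary $\tau$ and the elements $c_0<c_1<\cdots$, so its branch gives $C\le_T d$ but says nothing about $C'$. To obtain $C'\le_T d$ one must interleave genuine jump-forcing into the tree (at stage $e$, decide whether some finite extension within the current reservoir makes $\Phi_e^{C}(e)\!\downarrow$), so that the branch itself records the jump of $C$. The coding of bits of $d$ into the parity of the $c_n$ is by comparison the easy half.
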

\begin{theorem}\label{thm:cohcons}
  $\lp{COH}$ is $\Pi^1_1$-conservative over $\ls{RCA_0}$, $\ls{RCA_0}+\lp[\Pi^0_1]{CP}$, $\ls{RCA_0}+\lp[\Sigma^0_2]{IA}$.
\end{theorem}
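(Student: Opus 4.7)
My plan is to prove all three conservation statements by a single model-theoretic $\omega$-extension argument in the style of \cite[Section 12]{CJS01}. Fix $T$ to be one of the three base theories listed in the statement. Suppose for contradiction that some $\Pi^1_1$ sentence $\psi \equiv \Forall{X}\phi(X)$, with $\phi$ arithmetical, is provable in $T + \lp{COH}$ but not in $T$. Then there is a countable model $(M, \mathcal{S}) \models T + \neg\psi$ together with a witness $A \in \mathcal{S}$ such that $(M, \mathcal{S}) \models \neg\phi(A)$. It suffices to extend $\mathcal{S}$ to a countable family $\mathcal{S}' \supseteq \mathcal{S}$ with $(M, \mathcal{S}') \models T + \lp{COH}$: since $\phi$ is arithmetical and the first-order part $M$ is unchanged, $(M, \mathcal{S}') \models \neg\phi(A)$ as well, contradicting the assumed provability of $\psi$.

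I would build $\mathcal{S}'$ in $\omega$ many stages. At stage $s$, a bookkeeping device picks some sequence of sets $(R_n^s)_n$ coded in the current family $\mathcal{S}_s$, and I adjoin a cohesive set $C_s$ for $(R_n^s)_n$ together with its Turing-downward closure to obtain $\mathcal{S}_{s+1}$. The bookkeeping ensures that every sequence in $\mathcal{S}' := \bigcup_s \mathcal{S}_s$ is eventually treated, so $(M, \mathcal{S}') \models \lp{COH}$. The crux is to choose $C_s$ so that $(M, \mathcal{S}_{s+1}) \models T$, which will be arranged by an internal application of Theorem \ref{thm:js}: relative to any oracle $X_s$ coding $\mathcal{S}_s$ one can pick $C_s$ cohesive for $(R_n^s)_n$ with $(C_s \oplus X_s)'' \equiv_T X_s''$ holding in $M$, i.e.\ $C_s$ is low$_2$ over $X_s$.

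This low$_2$ property is exactly what is needed to preserve each of the three axiom systems. The standard preservation argument shows that adding a set low$_2$ over the oracle preserves $\Sigma^0_2$-induction, and a fortiori both the $\Sigma^0_1$-induction of $\ls{RCA_0}$ and the bounded collection principle $\lp[\Pi^0_1]{CP}$; the isolated preservation of $\lp[\Pi^0_1]{CP}$ alone (without $\Sigma^0_2$-IA in the base) follows by the parallel reduction of $\Pi^0_1$-formulas with parameter $C_s$ via the oracle-jump $X_s'$, which controls the witnesses needed for bounded collection. Iterating through all $\omega$ stages therefore gives $(M, \mathcal{S}') \models T + \lp{COH}$ as required.

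The main obstacle is the internal formalization of Theorem \ref{thm:js} in the possibly nonstandard model $M$: one must verify that the Jockusch--Stephan construction of a low$_2$ cohesive set, which relies on a low basis theorem over a $\Sigma^0_1$-tree, goes through with only the induction strength available in $T$ and applies to possibly nonstandard sequences $(R_n^s)_n$ and nonstandard oracles $X_s$. This is a routine but delicate adaptation of the preservation machinery in \cite{CJS01}, and I do not expect essentially new ideas to be required beyond that framework.
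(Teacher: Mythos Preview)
The paper does not give its own proof of this theorem; it simply attributes the three cases to the literature: $\ls{RCA_0}$ and $\ls{RCA_0}+\lp[\Sigma^0_2]{IA}$ to Cholak--Jockusch--Slaman \cite{CJS01}, and $\ls{RCA_0}+\lp[\Pi^0_1]{CP}$ to Chong--Slaman--Yang \cite{CSYta}. Your $\omega$-extension sketch, iteratively adjoining cohesive sets that are $low_2$ over the current parameter, is precisely the \cite{CJS01} method and is adequate for the first and third base theories.

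The genuine gap is in your treatment of $\ls{RCA_0}+\lp[\Pi^0_1]{CP}$. Your final paragraph calls the internal formalization of Theorem~\ref{thm:js} ``routine but delicate'' and says you ``do not expect essentially new ideas to be required beyond'' the \cite{CJS01} framework. This is exactly where the argument breaks down. In a model satisfying $\lp[\Pi^0_1]{CP}$ (equivalently $B\Sigma^0_2$) but \emph{not} $\lp[\Sigma^0_2]{IA}$, the Jockusch--Stephan construction and the relativized low basis theorem over the jump cannot be carried out as written: the verification that the finite-injury/tree construction succeeds is itself a $\Sigma^0_2$-inductive argument, and the hand-wave about ``controlling witnesses via $X_s'$'' does not supply a substitute. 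This case was open after \cite{CJS01} precisely for this reason, and its resolution in \cite{CSYta} required genuinely new machinery (nonstandard methods specific to $B\Sigma^0_2$-models, in particular the analysis of models with a proper $\Sigma_2$-cut) rather than a routine adaptation. You should either cite \cite{CSYta} for that case, as the paper does, or explain what replaces $\Sigma^0_2$-induction in your construction.
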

This result for \ls{RCA_0} and $\ls{RCA_0}+\lp[\Sigma^0_2]{IA}$ is due to Cholak, Jockusch, Slaman, see \cite{CJS01}, the result for $\ls{RCA_0}+\lp[\Pi^0_1]{CP}$ is due to Chong, Slaman, Yang, see \cite{CSYta}.
\begin{corollary}
  $\ls{RCA_0}+\lp{StCOH}$ is $\Pi^0_2$-conservative over \ls{PRA}.
\end{corollary}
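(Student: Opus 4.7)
The plan is to chain three known conservation results. First, by the Hirschfeldt--Shore equivalence \cite[4.4]{HS07} cited just above, $\ls{RCA_0}+\lp{StCOH}$ and $\ls{RCA_0}+\lp{COH}+\lp[\Pi^0_1]{CP}$ have the same theorems. Therefore any $\Pi^0_2$-sentence provable from $\ls{RCA_0}+\lp{StCOH}$ is provable from $\ls{RCA_0}+\lp[\Pi^0_1]{CP}+\lp{COH}$.

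Second, I would apply the Chong--Slaman--Yang half of Theorem \ref{thm:cohcons}: $\lp{COH}$ is $\Pi^1_1$-conservative over $\ls{RCA_0}+\lp[\Pi^0_1]{CP}$. Since $\Pi^0_2$-sentences are in particular $\Pi^1_1$, the above $\Pi^0_2$-sentence is already provable in $\ls{RCA_0}+\lp[\Pi^0_1]{CP}$.

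Third, I would strip the collection principle and then the $\Sigma^0_1$-induction of $\ls{RCA_0}$ to get down to \ls{PRA}. Over $\ls{RCA_0}$ the principle $\lp[\Pi^0_1]{CP}$ coincides with $B\Sigma_2$, and by the classical Paris--Kirby theorem $B\Sigma_2$ is $\Pi^0_3$-conservative (hence $\Pi^0_2$-conservative) over $I\Sigma_1$. Finally, the Parsons--Mints--Takeuti theorem yields that $I\Sigma_1$ (equivalently, the first-order part of $\ls{RCA_0}$) is $\Pi^0_2$-conservative over \ls{PRA}. Composing the three conservations gives the corollary.

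There is no substantive obstacle beyond citing the four ingredients correctly; the main care is to note that every intermediate theory extends $\ls{RCA_0}$ (so the Paris--Kirby and Parsons conservations, although originally proved for the first-order fragments, apply to the $\Pi^0_2$-sentences under consideration, which are purely arithmetical) and that the Chong--Slaman--Yang conservation is stated over $\ls{RCA_0}+\lp[\Pi^0_1]{CP}$, which is exactly what is produced by the Hirschfeldt--Shore reduction. Once this is observed the proof is a single line.
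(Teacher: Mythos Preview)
Your proposal is correct and follows essentially the same route as the paper's proof: the paper also invokes the Hirschfeldt--Shore equivalence and the Chong--Slaman--Yang case of Theorem~\ref{thm:cohcons}, and then appeals to the single ``fact that $\lp[\Pi^0_1]{CP}$ is $\Pi^0_2$-conservative over \ls{PRA}.'' Your third step merely unpacks that fact into its standard constituents (Paris--Kirby plus Parsons--Mints--Takeuti), so the arguments coincide.
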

\begin{proof}
  Theorem~\ref{thm:cohcons} together with the fact that \lp[\Pi^0_1]{CP} is $\Pi^0_2$-conservative over \ls{PRA}.
\end{proof}

\section{Bolzano-Weierstra\ss\ principle}

Let \lp{BW} be the statement that every sequence $(y_i)_{i\in\Nat}$ of rational numbers in the interval $[0,1]$ admits a fast converging subsequence, that is a subsequence converging with the rate $2^{-n}$ or equivalently any other rate given by a computable function resp.\ by a function in the theory. This principle covers the full strength of Bolzano-Weierstra\ss, i.e.\ one can take a bounded sequence of real numbers.

Let \lp{BW_{weak}} be the statement that every sequence $(y_i)_{i\in\Nat}$ of rational numbers in the interval $[0,1]$ admits a Cauchy subsequence (a sequence converging but not necessarily fast), more precisely
\begin{multline*}
(\lp{BW_{weak}})\colon \\\Forall{(y_i)_{i\in\Nat}\subseteq \Rat \cap [0,1]} \Exists{f \text{ strictly monotone}} \Forall{n} \Exists{s} \Forall{v,w\ge s}\ |y_{f(v)} - y_{f(w)}| <_\Rat 2^{-n}
.\end{multline*}

The statement \lp{BW_{weak}} also implies that every bounded sequence of real numbers contains a Cauchy subsequence. Just continuously map the bounded sequence into $[0,1]$ and take a diagonal sequence of rational approximations of the elements of the original sequence.

We will denote by \lpp{BW}{Y}  and \lpp{BW_{weak}}{Y} the statement that the bounded sequence coded by $Y$ contains a (slowly) converging subsequence.

The principles \lp{BW} and \lp{BW_{weak}} also imply the corresponding Bolzano-Weierstra\ss\ principle for the Cantor space $2^\Nat$:
\begin{lemma}\label{lem:bwcant}
  Over \ls{RCA_0} 
  \begin{itemize}
  \item \lp{BW} implies the Bolzano-Weierstra\ss\ principle for the Cantor space $2^\Nat$ and
  \item \lp{BW_{weak}} implies the weak Bolzano-Weierstra\ss\ principle for the Cantor space $2^\Nat$, i.e.\ for every sequence in $2^\Nat$ there exists a slowly converging Cauchy subsequence.
  \end{itemize}

  Moreover these implications are \emph{instance-wise}, i.e.\ there exists an $e$ such that over \ls{RCA_0} the (weak) Bolzano-Weierstra\ss\ principles for a sequence $(x_i)_{i\in\Nat}\subseteq 2^\Nat$ coded by $X$ is implied by \lpp{BW_{(weak)}}{\{e\}^X}.

\end{lemma}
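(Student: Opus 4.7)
The plan is to use the standard Cantor-set embedding
$\phi\colon 2^\Nat \to [0,1]$ defined by $\phi(x) := \sum_{k=0}^\infty 2x(k)\cdot 3^{-(k+1)}$, whose crucial \emph{gap property} is: if $x,x'\in 2^\Nat$ first differ at bit $k$, then $|\phi(x)-\phi(x')|\ge 3^{-(k+1)}$. This converts convergence in $[0,1]$ (which $\lp{BW_{(weak)}}$ handles) back into convergence in $2^\Nat$.

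Concretely, given a sequence $(x_i)_{i\in\Nat}\subseteq 2^\Nat$ coded by $X$, I would uniformly compute from $X$ the rational approximations
\[
y_i := \sum_{k=0}^{i-1} 2x_i(k)\cdot 3^{-(k+1)} \;\in\; \Rat\cap[0,1],
\]
which satisfy $|y_i-\phi(x_i)|\le 3^{-i}$ and have common denominator $3^i$, so are presented as genuine rationals. The index $e$ from the statement is the index of this primitive-recursive-in-$X$ procedure. For \lp{BW_{weak}}: applying \lpp{BW_{weak}}{\{e\}^X} yields a strictly monotone $f$ such that $(y_{f(v)})_v$ is Cauchy. The \emph{same} $f$ witnesses \lp{BW_{weak}} in $2^\Nat$: given $n$, use the Cauchy property to pick $s$ with $|y_{f(v)}-y_{f(w)}|<\tfrac12\cdot 3^{-(n+1)}$ for all $v,w\ge s$, and (using that $f$ is strictly monotone, hence $f(s)\to\infty$) enlarge $s$ so that $2\cdot 3^{-f(s)}<\tfrac12\cdot 3^{-(n+1)}$. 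Then for $v,w\ge s$,
\[
|\phi(x_{f(v)})-\phi(x_{f(w)})| \;\le\; |y_{f(v)}-y_{f(w)}| + 2\cdot 3^{-f(s)} \;<\; 3^{-(n+1)},
\]
so by the gap property $x_{f(v)}$ and $x_{f(w)}$ agree on the first $n$ bits.

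For \lp{BW}: applying \lpp{BW}{\{e\}^X} gives strictly monotone $f$ with rate $2^{-n}$, i.e.\ $|y_{f(n)}-y_{f(m)}|<2^{-n+1}$ for $m\ge n$. Since $2^{-n}$ beats $3^{-n}$ only polynomially, $f$ alone does not deliver rate $2^{-n}$ in Cantor space; I would thin further by a primitive recursive $g$ (e.g.\ $g(k)=2k+3$) chosen so that $2^{-g(k)+1}+2\cdot 3^{-g(k)} < 3^{-(k+1)}$. Then $h:=f\circ g$ is strictly monotone, and the same gap-plus-approximation argument shows that for all $v,w\ge k$ the bits $x_{h(v)}$ and $x_{h(w)}$ agree on the first $k$ positions, which is precisely fast convergence with rate $2^{-k}$ in $2^\Nat$.

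The formalization in \ls{RCA_0} is routine: the arithmetic inequalities $|y_i-\phi(x_i)|\le 3^{-i}$ and the gap property are $\Pi^0_1$ facts provable by straightforward geometric series estimates, and the thinning function $g$ is given by an explicit term. The only genuine conceptual ingredient is the choice of embedding: the base-$3$ gap is what allows a small error in $[0,1]$ to be translated back into an exact bitwise agreement in $2^\Nat$, and this is what I would expect to be the content of the lemma.
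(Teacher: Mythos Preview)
Your proof is correct and follows essentially the same route as the paper: both use the Cantor middle-third embedding $x\mapsto\sum_{k\ge 0}2x(k)\,3^{-(k+1)}$ and the bi-implication $dist_{2^{\Nat}}(x,y)<2^{-n}\iff dist_{\Real}(h(x),h(y))<3^{-(n+1)}$ to transfer (slow) Cauchy subsequences back and forth. The paper simply sets $\{e\}^X=(h(x_i))_i$ and appeals to its earlier remark that \lp{BW_{(weak)}} for rationals extends to bounded real sequences via diagonal rational approximations; you make this step explicit by working directly with the truncations $y_i$ and the triangle-inequality error $3^{-i}$, and you likewise spell out the primitive recursive thinning needed for the rate conversion in the \lp{BW} case, which the paper absorbs into its ``equivalently any other computable rate'' convention.
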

\begin{proof}
  Define the mapping $h\colon 2^\Nat \to [0,1]$ as
  \[
  h(x) = \sum_{i=0}^\infty \frac{2x(i)}{3^{i+1}}
  .\]
  The image of $h$ is the Cantor middle-third set.
  
  One easily establishes 
  \[
  dist_{2^{\Nat}}(x,y)<2^{-n} \quad\text{if{f}}\quad dist_{\Real}(h(x),h(y))<3^{-(n+1)}
  .\]
  Therefore (slow) Cauchy sequences of $2^\Nat$ primitive recursively correspond to (slow) Cauchy sequences of the Cantor middle-third set.

  For $\{e\}$ choose the function mapping $(x_i)_{i\in\Nat}$ to $(h(x_i))_{i\in\Nat}$.
  The lemma follows.
\end{proof}

The full Bolzano-Weierstra\ss\ principle (\lp{BW}) results from \lp{BW_{weak}}, if we additionally require an effective Cauchy-rate, e.g.\ $s=2^{-n}$ in the above definition of \lp{BW_{weak}}. One also obtains full \lp{BW} if one uses an instance of $\Pi^0_1$-comprehension (or Turing jump) to thin out the Cauchy sequence making it fast converging.

The weak version of the Bolzano-Weierstra\ss\ principle is for instance considered in computational analysis, see \cite[section 3]{RZ08}.

\lp{BW_{weak}} is also interesting in the context of proof-mining or ``hard analysis'', i.e.\ the extraction of quantitative information for analytic statements. For an introduction to hard analysis see \cite[\S 1.3]{tT08}, for proof-mining see \cite{uK08}. For instance if one uses \lp{BW_{weak}} to prove that a sequence converges, by theorem~\ref{thm:bwcons} below one can expect a primitive recursive rate of metastability, in the sense of Tao \cite[\S 1.3]{tT08}. Such proofs  occur in fixed-point theory, for example Ishikawa's fixed-point theorem uses such an argument, see \cite{uK05,sI76}.

Note that in this case only a single instance of the Bolzano-Weierstra\ss\ principle is used and the accumulation point is not used in  a $\Sigma^0_1$-induction, therefore one obtains the same results using Kohlenbach's elimination of Skolem functions for monotone formulas, see for instance \cite[theorem 1.2]{uK00}. Nested uses of \lp{BW} imply arithmetic comprehension and thus lead to non-primitive recursive growth.
In contrast to that, we will show that even nested uses of \lp{BW_{weak}} in a context with full $\Sigma^0_1$\nobreakdash-induction do not result in more than primitive recursive growth.

\section{Results}

\begin{theorem}\label{thm:bw}
  Over \ls{RCA_0} the principles \lp{BW} and \lp[\Sigma^0_1]{WKL} are instance-wise equivalent. More precisely 
  \begin{align*}
    \ls{RCA_0} &\vdash \Exists{e_1} \Forall{X} \left(\lpp[\Sigma^0_1]{WKL}{\{e_1\}^X} \IMPL \lpp{BW}{X}\right) ,\\
    \ls{RCA_0} &\vdash \Exists{e_2} \Forall{Y} \left(\lpp{BW}{\{e_2\}^Y} \IMPL \lpp[\Sigma^0_1]{WKL}{Y}\right)
  ,\end{align*}
  where \lpp[\Sigma^0_1]{WKL}{Y} is weak K\"onig's lemma for a $\Sigma^0_1$-tree coded by $Y$.

  In language with higher order functionals $\{e_1\}$ and $\{e_2\}$ could be given by fixed primitive recursive functionals.
\end{theorem}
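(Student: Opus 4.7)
The plan is to give two uniformly primitive recursive constructions: one turning a bounded sequence into a $\Sigma^0_1$-$0/1$-tree whose infinite branches encode accumulation points, and one turning a $\Sigma^0_1$-$0/1$-tree into a bounded sequence whose accumulation points encode infinite branches.

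For $\lpp[\Sigma^0_1]{WKL}{\{e_1\}^X}\IMPL\lpp{BW}{X}$, given the bounded sequence $(y_i)_{i\in\Nat}\subseteq\Rat\cap[0,1]$ coded by $X$, I would associate to each $\sigma\in 2^{<\Nat}$ the dyadic closed interval $I_\sigma\subseteq[0,1]$ of length $2^{-|\sigma|}$ that $\sigma$ codes and put
\[
\sigma\in T \IFF \Exists{i_0<\dots<i_{|\sigma|-1}}\ \bigwedge_{j<|\sigma|} y_{i_j}\in I_\sigma.
\]
This defines a $\Sigma^0_1$-predicate in $X$ uniformly, and $T$ is closed under prefixes because $I_\sigma\supseteq I_\tau$ when $\sigma$ is a prefix of $\tau$. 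By pigeonhole at each level $n$ some dyadic interval of length $2^{-n}$ contains infinitely many $y_i$, hence at least $n$ of them, so $T$ is infinite. An infinite branch $b$ supplied by \lp[\Sigma^0_1]{WKL} then yields the accumulation point $r_b:=\sum_i b(i)\,2^{-i-1}$, since $I_{b|_n}$ contains at least $n$ witnessing indices, so for every $\epsilon=2^{-n}$ arbitrarily many $y_i$ lie within $\epsilon$ of $r_b$. A fast converging subsequence is then extracted primitive recursively by taking, at step $n$, the least fresh index $i$ with $y_{i}\in I_{b|_n}$.

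For the converse $\lpp{BW}{\{e_2\}^Y}\IMPL\lpp[\Sigma^0_1]{WKL}{Y}$, let $T$ be the infinite $\Sigma^0_1$-$0/1$-tree coded by $Y$ and $T[s]$ its stage-$s$ approximation (after taking the prefix closure, which is still $\Sigma^0_1$). Employ the Cantor middle-third embedding
\[
h(\sigma) \;:=\; \sum_{i<|\sigma|}\frac{2\sigma(i)}{3^{i+1}}
\]
from Lemma~\ref{lem:bwcant} and define $y_s:=h(\sigma_s)$, where $\sigma_s$ is the lex-leftmost string of maximum length in $T[s]$ (with any fixed default when $T[s]$ is empty). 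Since $T$ is infinite, $|\sigma_s|\to\infty$. An accumulation point $r^*$ of $(y_s)$ produced by \lp{BW} then equals $h(b)$ for a unique $b\in 2^\Nat$, because $h$ is a homeomorphism onto the closed Cantor middle-third set and the $\sigma_s$ witnessing the limit have unbounded length. For each $n$ the initial segment $b|_n$ agrees with $\sigma_s|_n$ for infinitely many $s$, so $b|_n\in T$ by prefix closure, and $b$ is the desired infinite branch.

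The main obstacle I foresee is the ambiguity a naive dyadic encoding $\sigma\mapsto\sum\sigma(i)2^{-i-1}$ would introduce in the reverse direction: dyadic rationals have two binary expansions, and sequences of $\sigma_s$ of bounded length would produce spurious accumulation points not corresponding to infinite branches. The Cantor middle-third coding, being injective on $2^\Nat$ with nowhere-dense closed image, together with the choice of maximum-length $\sigma_s$, eliminates both problems so that every accumulation point is genuinely of the form $h(b)$ for a branch $b$. Apart from this, both constructions are uniformly primitive recursive, so the indices $\{e_1\}$ and $\{e_2\}$ can be presented as fixed primitive recursive functionals as asserted by the theorem.
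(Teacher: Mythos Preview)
Your argument is correct in outline, and the first direction is the standard construction (which the paper simply cites). One small repair: at step $n$ of the subsequence extraction you should search in $I_{b|_{n+1}}$ rather than $I_{b|_n}$, since $b|_n\in T$ only guarantees $n$ indices in $I_{b|_n}$, all of which may already have been used as $f(0),\dots,f(n-1)$; with the shift, at least $n+1$ indices land in $I_{b|_{n+1}}$, so a fresh one exists and strict monotonicity of $f$ follows.

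Your second direction takes a genuinely different route from the paper's. The paper first reduces \lp[\Sigma^0_1]{WKL} instance-wise to $\Sigma^0_2$-separation, and then, given disjoint $\Sigma^0_2$-sets $A_0,A_1$, builds a sequence $(h_k)_k\subseteq 2^{\Nat}$ by comparing, at each argument $n$, the lengths of the longest verified initial segments of Skolem functions for the $\Pi^0_2$ complements $\overline{A_0},\overline{A_1}$; any accumulation point of $(h_k)$ is then the characteristic function of a separating set. You instead work directly with the tree, feeding the leftmost longest node of each finite stage $T[s]$ through the Cantor embedding and reading an infinite branch off the accumulation point. Your route is shorter and makes the uniformity immediate; the paper's route has the side benefit of isolating $\Sigma^0_2$-separation as an instance-wise equivalent along the way. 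Both arguments formalize in \ls{RCA_0} and yield primitive recursive functionals for $e_1,e_2$.
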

\begin{proof}
  For the first implication see \cite{SK} and \cite[section 5.4]{uK98c}.

  For the converse implication note that \lp[\Sigma^0_1]{WKL} is instance-wise equivalent to $\Sigma^0_2$\nobreakdash-\hspace{0pt}separation, i.e.\ the statement that for two $\Sigma^0_2$-sets $A_0,A_1$ with $A_0\cap A_1 = \emptyset$ there exists a set $S$, such that $A_0 \subseteq S \subseteq \overline{A_1}$. This is for instance a consequence of \cite[lemma IV.4.4]{sS99} relativized to $\Delta^0_2$-sets. This proof of this lemma also yields a construction of the sets $A_0,A_1$, i.e.\ an $e'$ such that $\{e'\}^Y$ yields a set coding $A_0,A_1$.

  Thus is suffices to prove $\Sigma^0_2$-separation of two $\Sigma^0_2$-sets $A_0,A_1$.

  Let $B_i$ for $i<2$ be a quantifier free formula such that
  \[
  n \in \overline{A_i} \equiv \Forall{x}\Exists{y} B_i(x,y;n)
  .\]
  We assume that $y$ is unique; one can always achieve this by requiring $y$ to be minimal. 
  Note that by assumption $\Forall{x}\Exists{y} B_0(x,y;n) \OR \Forall{x}\Exists{y} B_1(x,y;n)$.

  Then define
  \[
  f_i(n,k) := \max \left\{ s < k \mid \Forall{x<\lth{s}} \left(B_i(x,(s)_x; n) \right)\right\}
  .\]
  We use here a sequence coding that is monotone in each component, i.e.\ for two sequences $s,t$ with the same length we have $s \le t$ if $(s)_x \le (t)_x$ for all $x<\lth(s)$, see for instance \cite[definition~3.30]{uK08}.

  If for fixed $n,i$ the statement $\Forall{x}\Exists{y} B_i(x,y;n)$ holds and $f_y$ is the choice function for $y$, i.e.\ the function satisfying $\Forall{x} B_i(x,f_y(x);n)$, then
  for the course-of-value function $\bar{f}_y$ of $f_y$
  \[
  f_i(n,\bar{f}_y(m)+1) = \bar{f}_y(m)
  .\]
  If $\Forall{x}\Exists{y} B_i(x,y;n)$ does not hold then $\lambda k .f_i(n,k)$ is bounded.
  Define $g_i(n,k):= \lth(f_i(n,k))$ and for each $n$ let $g_{i,n} := \lambda k. g_i(n,k)$.
  Then for each $i$
  \[
  \text{the range of } g_{i,n} \text{ is } \Nat \text{\quad if{f}\quad} \Forall{x}\Exists{y} B_i(x,y;n).
  \] 
  Therefore it is sufficient to find a set $S$ obeying
  \begin{equation}\label{eq:sreq}
  \Forall{n} \left( rng(g_{0,n}) \neq \Nat \IMPL n\in S \AND rng(g_{1,n}) \neq \Nat \IMPL n\notin S\right)
  .\end{equation}
  
  Define a sequence $(h_k)_{k\in\Nat} \subseteq 2^\Nat$ by
  \[
  h_k (n) :=
  \begin{cases}
    0 & \text{if } g_0(n,k) \ge g_1(n,k),\\
    1 & \text{otherwise.}
  \end{cases}
  \]
  By hypothesis, for each $n$ there is at least one $i<2$ such that the range of $g_{i,n}$ is $\Nat$.
  For a fixed $n$, if there is exactly one $i<2$, such that the range of $g_{i,n}$ is $\Nat$ then $\lim_{k\to\infty} h_k(n) = i$.
  In this case \eqref{eq:sreq} is satisfied for this $n$ if 
  \[
  n\in S \quad\text{if{f}}\quad \lim_{k\to\infty} h_k(n)=1
  .\]
  If for each $i<2$ the range $g_{i,n}$ is $\Nat$ then \eqref{eq:sreq} is trivially satisfied for this $n$.

  Applying \lp{BW} to $h_k$, yields an accumulation point $h$. For $h$ then
  \[
  h(n) = \lim_{k\to \infty} h_k(n) \quad\text{if the limit exists}
  .\]
  Hence  $h$ describes a characteristic function of a set $S$ obeying \eqref{eq:sreq}.

  A number  $e_2$ of a Turing machine such that $\{e_2\}^Y$ yields the Cantor middle-third set belonging to $(h_k)_k$ can easily be computed using $e$ from lemma~\ref{lem:bwcant} and $e'$.

  This proves the theorem.
\end{proof}

Since 
\[
\ls{RCA_0} \vdash \lp[\Sigma^0_1]{WKL}\IFF\lp[\Pi^0_1]{CA}
\]
 one obtains as consequence of this theorem that well known result that \lp{BW} is equivalent to \ls{ACA_0} over \ls{RCA_0}, see \cite[theorem I.9.1]{sS99}.

Notice that in Theorem~\ref{thm:bw} the use of \lp[\Sigma^0_1]{WKL} could neither be replaced by \lp[\Pi^0_1]{CA} nor \lp[\Pi^0_2]{CA}.

\begin{theorem}\label{thm:stcohbwweak}
  Over \ls{RCA_0} the principles \lp{BW_{weak}} and \lp{StCOH} are instance-wise equivalent. More precisely
  \begin{align*}
    \ls{RCA_0} &\vdash \Exists{e_1} \Forall{X} \left(\lpp{StCOH}{\{e_1\}^X} \IMPL \lpp{BW_{weak}}{X}\right) ,\\
    \ls{RCA_0} &\vdash \Exists{e_2} \Forall{Y} \left(\lpp{BW_{weak}}{\{e_2\}^Y} \IMPL \lpp{StCOH}{Y}\right).
  \end{align*}
  In a language with higher order functionals $\{e_1\}$ and $\{e_2\}$ could be given by fixed primitive recursive functionals.
\end{theorem}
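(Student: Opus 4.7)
Both directions exploit that the Cauchy condition on $2^\Nat$ at rate $2^{-n}$ is exactly agreement on the first $n$ coordinates, which matches precisely the uniform ``$\Forall{i<n}$'' in the definition of strong cohesion (the feature that distinguishes it from plain \lp{COH}).

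For $\lp{StCOH}\IMPL\lp{BW_{weak}}$: given $(y_i)_{i\in\Nat}\subseteq [0,1]\cap\Rat$ coded by $X$, I would primitive-recursively list the dyadic ``which interval'' sets $R_{\langle m,k\rangle} := \{i : y_i\in [k\cdot 2^{-m},(k+1)\cdot 2^{-m})\}$ (the last interval closed at $1$) so that all $2^m$ level-$m$ sets appear among the first $N(m)$ indices, e.g.\ at positions $2^m-1,\dots,2^{m+1}-2$, giving $N(m)=2^{m+1}-1$. Given an infinite strongly cohesive $S$ with increasing enumeration $f$, strong cohesion applied at parameter $N(m)$ yields a single stage $s_m$ for which, for every $k<2^m$, either $S\subseteq^* R_{\langle m,k\rangle}$ or $S\subseteq^*\overline{R_{\langle m,k\rangle}}$ with witness $s_m$; infiniteness of $S$ together with $\bigcup_{k<2^m}R_{\langle m,k\rangle}=\Nat$ forces exactly one $k^*(m)$ into the first alternative, so $y_{f(v)}$ and $y_{f(w)}$ lie in a common interval of length $2^{-m}$ for all sufficiently large $v,w$.

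For $\lp{BW_{weak}}\IMPL\lp{StCOH}$: given $(R_n)_n$ coded by $Y$, I would define $x_k\in 2^\Nat$ by $x_k(n):=\chi_{R_n}(k)$, compose with the Cantor middle-third coding of Lemma~\ref{lem:bwcant} to obtain a primitive-recursive index $\{e_2\}^Y$ for a sequence in $[0,1]\cap\Rat$, and apply $\lp{BW_{weak}}$ to get a strictly monotone $f$ with $(x_{f(v)})_v$ Cauchy in $2^\Nat$. Set $S:=\{f(v):v\in\Nat\}$, which is $\Delta^0_1$ in $f$ because $f$ is strictly monotone so $f(v)\ge v$. For each $n$, let $s$ witness $d(x_{f(v)},x_{f(w)})<2^{-n}$ for $v,w\ge s$ and put $s^*:=f(s)$; any $j\ge s^*$ in $S$ is $f(v)$ for some $v\ge s$, hence $j\in R_i\IFF f(s)\in R_i$ for every $i<n$, which is exactly strong cohesion at level $n$ with the common witness $s^*$.

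The only delicate point is the match between strong cohesion and the Cauchy condition: a single Cauchy stage at rate $2^{-n}$ simultaneously controls $n$ coordinates of $2^\Nat$, exactly as a strong-cohesion stage simultaneously controls $n$ sets. Working with plain $\lp{COH}$ in direction~1 would instead require $\lp[\Pi^0_1]{CP}$ to combine the $2^m$ per-interval stages, in line with the Hirschfeldt--Shore decomposition $\lp{StCOH}\IFF \lp{COH}\AND\lp[\Pi^0_1]{CP}$ noted earlier. All remaining verifications---primitive recursiveness of the coding maps, the formalization in $\ls{RCA_0}$, and translation between $2^\Nat$ and $[0,1]\cap\Rat$ via Lemma~\ref{lem:bwcant}---are routine.
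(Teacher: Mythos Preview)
Your proof is correct and follows essentially the same route as the paper; in particular the direction $\lp{BW_{weak}}\IMPL\lp{StCOH}$ is identical. For $\lp{StCOH}\IMPL\lp{BW_{weak}}$ the paper uses a slightly more economical encoding---one set $R_i:=\{j:y_j\in\bigcup_{k\text{ even}}[k/2^i,(k+1)/2^i]\}$ per level, so that the binary word of cohesion choices up to level $i$ directly names the containing dyadic interval---in place of your $2^m$ sets per level with a pigeonhole step, but the underlying argument (matching the uniform strong-cohesion bound to the single Cauchy stage) is the same.
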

\begin{proof}
  To prove \lp{BW_{weak}} for a sequence $(x_i)_{i\in\Nat}$ coded by $X$ define 
  \begin{align*}
    R_i&:=\left\{j \in \Nat \biggm| x_j \in\bigcup_{k \text{ even}}\left[\frac{k}{2^i},\frac{k+1}{2^i}\right]\right\}
    \intertext{and}
    R^y &:= \bigcap_{i<\lth(y)}
    \begin{cases}
      R_i & \text{if } (y)_i = 0, \\
      \overline{R_i} & \text{otherwise.}
    \end{cases}
  \end{align*}

  Let $f$ be a strictly increasing enumeration of a strongly cohesive set for $(R_i)_i$. Then by definition it follows, that
  \[
  \Forall{i}\Exists{y,s} \left(\lth(y)=i \AND \Forall{w>s}\ f(w)\in R^{y}\right)
  .\]
  This statement is equivalent to
  \[
  \Forall{i}\Exists{k,s} \Forall{w>s}\left( x_{f(w)}\in \left[\frac{k}{2^i},\frac{k+1}{2^{i}}\right]\right)
  ,\]
  which implies \lp{BW_{weak}}.
  Clearly there exists a number $e_1$ of a Turing machine computing $(R_i)_i$. The first part of the theorem follows.

  For the other direction, let $(R_i)_{i\in\Nat}$ be a sequence of sets coded by $Y$. Let $(x_i)_{i\in\Nat}\subseteq 2^\Nat$ be the sequence defined by
  \[
  x_i(n) :=
  \begin{cases}
    1 & \text{if } i\in R_n , \\
    0 & \text{if } i\notin R_n .
  \end{cases}
  \]
  Applying \lp{BW_{weak}} and lemma~\ref{lem:bwcant} to $(x_i)_i$ yields a slowly converging subsequence $(x_{f(i)})_{i\in\Nat}$, i.e.
  \[
  \Forall{n} \Exists{s} \Forall{j,j'\ge s} dist(x_{f(j)},x_{f(j')}) < 2^{-n}
  .\]
  By spelling out the definition of $dist$ and $x_i$ we obtain
  \[
  \Forall{n} \Exists{s} \Forall{j,j'\ge s} \Forall{i< n} \left(f(j) \in R_i \IFF f(j')\in R_i\right)
  ,\]
  which implies that the set strictly monotone enumerated by $f$ is strongly cohesive.

  The number $e_2$ can be easily computed using the construction in lemma~\ref{lem:bwcant}.
\end{proof}

As immediate corollary we obtain:
\begin{corollary}\label{cor:stcohbwweak}
  \[
  \ls{RCA_0} \vdash \lp{StCOH} \IFF \ls{BW_{weak}}
  \]
\end{corollary}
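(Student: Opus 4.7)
The plan is to observe that this corollary is an essentially trivial consequence of Theorem~\ref{thm:stcohbwweak}, obtained by universally quantifying the instance-wise implications and feeding arbitrary instances into the computable reductions $\{e_1\}$ and $\{e_2\}$. Instance-wise equivalence is a strictly stronger notion than the plain logical equivalence asserted in the corollary, so no additional work is needed beyond unpacking the statements.

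For the forward direction $\lp{StCOH} \IMPL \lp{BW_{weak}}$, I would reason in \ls{RCA_0} as follows. Assume \lp{StCOH} and let $X$ code an arbitrary bounded sequence of rationals in $[0,1]$, i.e.\ an arbitrary instance of \lp{BW_{weak}}. Then $\{e_1\}^X$ exists in \ls{RCA_0} and codes a sequence of sets $(R_i)_{i\in\Nat}$ (the sets defined in the proof of Theorem~\ref{thm:stcohbwweak}). By the assumption \lp{StCOH}, this sequence admits an infinite strongly cohesive set. The first instance-wise implication in Theorem~\ref{thm:stcohbwweak} then yields a slowly converging subsequence of the original sequence, establishing \lpp{BW_{weak}}{X}. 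Since $X$ was arbitrary, \lp{BW_{weak}} holds.

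The reverse direction $\lp{BW_{weak}} \IMPL \lp{StCOH}$ is symmetric: given an arbitrary $Y$ coding a sequence $(R_i)_{i\in\Nat}$, $\{e_2\}^Y$ codes a bounded sequence of rationals (obtained via the Cantor middle-third embedding of Lemma~\ref{lem:bwcant} applied to the indicator sequence $(x_i)_i$), and applying \lp{BW_{weak}} to this instance yields, via the second instance-wise implication, an infinite strongly cohesive set for $(R_i)_i$.

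There is no real obstacle here; the only point to verify is that the universal quantifiers $\Forall{X}$ and $\Forall{Y}$ in Theorem~\ref{thm:stcohbwweak} range over all sets in the model, which is exactly what is needed to match the unrelativised principles \lp{StCOH} and \lp{BW_{weak}}. All the computational content has already been carried out inside the proof of Theorem~\ref{thm:stcohbwweak}.
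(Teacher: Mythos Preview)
Your proposal is correct and matches the paper's approach: the paper simply states this as an ``immediate corollary'' of Theorem~\ref{thm:stcohbwweak} without further proof, and your unpacking of the instance-wise implications into the global equivalence is exactly the trivial deduction the paper has in mind.
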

Hence all results for \lp{StCOH} carry over to \lp{BW_{weak}}:
\begin{theorem}\label{thm:bwcons}
  \lp{BW_{weak}} is $\Pi^1_1$-conservative over $\ls{RCA_0}+\lp[\Pi^0_1]{CP}$, $\ls{RCA_0}+\lp[\Sigma^0_2]{IA}$. Especially $\ls{RCA_0}+\lp{BW_{weak}}$  is $\Pi^0_2$-conservative over \ls{PRA}.
\end{theorem}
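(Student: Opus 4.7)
The plan is to reduce every clause to the corresponding conservation statement for \lp{StCOH} and thereby to Theorem~\ref{thm:cohcons}, using Corollary~\ref{cor:stcohbwweak} as the bridge. Since \ls{RCA_0} already proves $\lp{BW_{weak}} \IFF \lp{StCOH}$, the theories $\ls{RCA_0}+\lp{BW_{weak}}$ and $\ls{RCA_0}+\lp{StCOH}$ have the same theorems, so it suffices to establish the three conservation assertions with \lp{StCOH} in place of \lp{BW_{weak}}. In particular, the $\Pi^0_2$-conservation over \ls{PRA} is then nothing new: it is precisely the corollary proved immediately after Theorem~\ref{thm:cohcons}.

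For the two $\Pi^1_1$-conservation clauses I would invoke the Hirschfeldt--Shore equivalence cited in the text (\cite[4.4]{HS07}), which states that over \ls{RCA_0} one has $\lp{StCOH} \IFF \lp{COH} \AND \lp[\Pi^0_1]{CP}$. Hence over any base extending $\ls{RCA_0}+\lp[\Pi^0_1]{CP}$, adding \lp{StCOH} is logically equivalent, relative to that base, to adding \lp{COH}. The conservation of $\ls{RCA_0}+\lp{BW_{weak}}$ over $\ls{RCA_0}+\lp[\Pi^0_1]{CP}$ is then immediate from the $\Pi^1_1$-conservation of \lp{COH} over the same base, which is precisely a part of Theorem~\ref{thm:cohcons}. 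For the clause with base $\ls{RCA_0}+\lp[\Sigma^0_2]{IA}$ the same reduction works, after noting that \lp[\Sigma^0_2]{IA} provably implies $\Sigma^0_2$-bounded collection and a fortiori \lp[\Pi^0_1]{CP}; so $\ls{RCA_0}+\lp[\Sigma^0_2]{IA}$ also contains \lp[\Pi^0_1]{CP}, and applying Theorem~\ref{thm:cohcons} finishes this case as well.

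There is scarcely a hard step in this program — the real mathematics sits in the cited results on \lp{COH} and in Corollary~\ref{cor:stcohbwweak}. The only thing one should verify explicitly is that the Hirschfeldt--Shore equivalence really is provable in \ls{RCA_0} (not only in a stronger base), so that the substitution of \lp{StCOH} by \lp{COH} works over $\ls{RCA_0}+\lp[\Pi^0_1]{CP}$ without any additional induction; this, however, is exactly the content of the cited theorem, so no further work is needed.
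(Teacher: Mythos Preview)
Your argument is correct and follows exactly the route the paper takes: the paper's own proof is the one-line citation ``Corollary~\ref{cor:stcohbwweak} and Theorem~\ref{thm:cohcons},'' and what you have written is simply the unpacking of that citation, including the implicit use of the Hirschfeldt--Shore equivalence to pass from \lp{StCOH} to \lp{COH} over bases containing \lp[\Pi^0_1]{CP}. There is no substantive difference in approach.
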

\begin{proof}
  Corollary~\ref{thm:stcohbwweak} and Theorem~\ref{thm:cohcons}.
\end{proof}

\begin{theorem}\mbox{}\label{thm:bwcomp}
  \begin{enumerate}
  \item Every recursive sequence of real numbers contains a $low_2$ Cauchy subsequence (a sequence converging but not necessarily fast).\label{enum:bwcomp1}
  \item There exists a recursive sequence of real numbers containing no computable Cauchy subsequence.\label{enum:bwcomp2}
  \item There exists a recursive sequence of real numbers containing no converging subsequence computable in $0'$.\label{enum:bwcomp3}
  \end{enumerate}
\end{theorem}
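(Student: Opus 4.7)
The strategy for all three parts is to use the instance-wise equivalence of Theorem~\ref{thm:stcohbwweak} to reduce each claim to the corresponding statement about (strongly) cohesive sets, which is then settled by Theorem~\ref{thm:js} together with related results from the Jockusch--Stephan classification.

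For part~(1), given a recursive sequence coded by $X$, the family $\{e_1\}^X$ is recursive, and Theorem~\ref{thm:js} supplies a $low_2$ r-cohesive set $C$. Since $C$ is cohesive for every recursive set, in particular for $\{e_1\}^X$, and since strong cohesiveness follows recursion-theoretically from cohesiveness on a fixed computable family (externally one simply sets $s := \max_{i<n} s_i$ over the finitely many cohesiveness witnesses), $C$ is strongly cohesive for $\{e_1\}^X$. Applying the first implication of Theorem~\ref{thm:stcohbwweak} then yields a Cauchy subsequence of $X$ computable from $X \oplus C$, and hence of $low_2$ degree.

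For part~(2), let $(R_i)_{i\in\Nat}$ be a recursive enumeration of the primitive recursive sets. A recursive set strongly cohesive for $(R_i)$ would be p-cohesive, and, being recursive, would have jump of degree $0'$. By Theorem~\ref{thm:js} the jump of a p-cohesive set must satisfy $d \gg 0'$; but $0' \not\gg 0'$, since any PA-degree over $0'$ lies strictly above $0'$. This contradiction shows that no recursive strongly cohesive set for $(R_i)$ exists. Taking the contrapositive of the second implication of Theorem~\ref{thm:stcohbwweak} applied to $(R_i)$ then produces a recursive sequence of reals with no recursive Cauchy subsequence.

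For part~(3) the goal is to exhibit a recursive family $(R_i)$ admitting no $0'$-computable (strongly) cohesive set; the contrapositive of the second implication of Theorem~\ref{thm:stcohbwweak} then converts such a family into the desired recursive sequence of reals. This is where the main obstacle lies, since Theorem~\ref{thm:js} alone does not rule out $\Delta^0_2$ cohesive sets: there are degrees $d \gg 0'$ with $d \le_T 0''$, so the jump of a putative $\Delta^0_2$ cohesive set is not directly excluded. One instead appeals to the sharper Jockusch--Stephan analysis of which degrees universally solve cohesiveness on recursive inputs, where $0'$ fails because it is not $low_2$ (its double jump is $0'''$, strictly above $0''$). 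The hard direction of this classification is established by a standard priority-style construction of a suitable recursive family, which I would cite rather than reproduce.
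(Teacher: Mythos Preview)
Your arguments for parts (1) and (2) are essentially correct and coincide with the paper's proof, which simply invokes Theorem~\ref{thm:stcohbwweak} together with Theorem~\ref{thm:js}.

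Part (3) is where your approach fails. Your claim that ``$0'$ fails because it is not $low_2$'' misidentifies the relevant criterion: the Jockusch--Stephan analysis (in its degree form) says that a degree $a$ bounds a p-cohesive set precisely when $a' \gg 0'$, and since $(0')' = 0'' \gg 0'$, the degree $0'$ \emph{does} bound p-cohesive --- indeed r-cohesive --- sets. Concretely, the complement of any maximal r.e.\ set is infinite, $\Pi^0_1$, and cohesive for all r.e.\ (hence all recursive) sets; so every recursive family whatsoever admits a $\Delta^0_2$ strongly cohesive set. The priority construction you hope to cite therefore cannot exist, and your contrapositive strategy for (3) collapses.

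The paper does not attempt your route. Its additional ingredient for (3) is the observation that the jump of any slowly converging Cauchy subsequence computes a \emph{fast} converging subsequence. Note also the shift in wording in the statement itself, from ``Cauchy subsequence'' in (1)--(2) to ``converging subsequence'' in (3): read together with the note, part (3) concerns fast convergence (equivalently, accumulation points), not merely slow Cauchyness. The argument then runs through Theorem~\ref{thm:js}: for the canonical sequence built from a listing of the primitive recursive sets, any Cauchy subsequence yields a p-cohesive set whose jump has degree $\gg 0'$, and a fast converging subsequence carries exactly the jump-level data (the cohesive set together with its modulus of convergence) used in the hard direction of Theorem~\ref{thm:js}, so it cannot lie below $0'$.
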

\begin{proof}
  Theorem~\ref{thm:stcohbwweak} and Theorem~\ref{thm:js}. For \ref{enum:bwcomp3} note that the jump of a slowly converging Cauchy sequence computes a fast converging subsequence.
\end{proof}
Theorem~\ref{thm:bw} gives rise to another proof of this theorem and Theorem~\ref{thm:js}: 
Let $d$ be a degree containing solutions to all recursive instances of \lp{BW}. Since \lp{BW} is equivalent to \lp[\Sigma^0_1]{WKL} any degree $d\gg 0'$ suffices. Thus we may assume that $d$ is $low$ over $0'$, i.e. $d'\equiv 0''$. 
Now let $e$ be a degree containing solutions to all recursive instances of \lp{BW_{weak}}. Since the choice of a fast convergent subsequence of a slow convergent subsequence is equivalent to the halting problem, $e$ may be chosen such that $e'\equiv d$. Thus $e''\equiv 0''$ or in other words $e$ is $low_2$. 
 
Theorem~\ref{thm:bwcomp}.\ref{enum:bwcomp1} improves a result obtained by Le~Roux and Ziegler in \cite[section 3]{RZ08}, which only considers full Turing jumps.

\bibliographystyle{amsalpha}
\bibliography{swkl}

\end{document}